\documentclass[a4paper]{amsart}
\usepackage[utf8]{inputenc}
\usepackage{amssymb}
\usepackage{amsthm}
\usepackage{amsmath}
\usepackage{mathtools}
\usepackage{enumitem}
\usepackage{nicefrac}
\usepackage{xcolor}
\usepackage{bm}
\usepackage{hyperref}

\title[Independence of premise for IZF]{Independence of premise for intuitionistic Zermelo-Fraenkel set theory}
\author{Patrick Uftring}

\address{University of the Bundeswehr Munich, Department of Computer Science, Werner-Heisenberg-Weg 39, 85579 Neubiberg, Germany}
\email{patrick.uftring@unibw.de}

\keywords{intuitionistic set theory, independence of premise, Friedman-Dragalin translation}
\subjclass[2020]{03F03; 03F50, 03F55}

\newcommand{\czf}{\textsf{CZF}}
\newcommand{\izf}{\textsf{IZF}}
\newcommand{\zfc}{\textsf{ZFC}}
\newcommand{\period}{\text{.}}
\newcommand{\comma}{\text{,}}

\newcommand{\haomega}{\textsf{HA}^{\omega}}
\newcommand{\ha}{\textsf{HA}}
\newcommand{\mr}{\mathrel{\textsf{mr}}}

\newcommand{\markov}{\textsf{MP}}

\begin{document}
	
	\theoremstyle{plain}
	\newtheorem{lemma}{Lemma}
	\numberwithin{lemma}{section}
	\newtheorem{proposition}[lemma]{Proposition}
	\newtheorem{corollary}[lemma]{Corollary}
	\newtheorem{theorem}[lemma]{Theorem}
	\newtheorem{conjecture}[lemma]{Conjecture}
	\newtheorem*{theorem*}{Theorem}
	
	\theoremstyle{definition}
	\newtheorem{definition}[lemma]{Definition}
	\newtheorem{question}[lemma]{Question}
	\newtheorem{convention}[lemma]{Convention}
	\newtheorem{problem}[lemma]{Problem}
	
	\theoremstyle{remark}
	\newtheorem{remark}[lemma]{Remark}
	
	\begin{abstract}
	We answer the following question by Frittaion, Nemoto, and Rathjen positively: ``Is [independence of premise] an admissible rule of \czf{} or any other familiar constructive/intuitionistic set theory~${T}$?'' To be more precise, we show that whenever \izf{} (or \czf{} with full separation) derives a statement of the form $$\lnot\psi \to \exists y\ \varphi(y)$$ where~${y}$ does not occur in~${\psi}$, then \izf{} (or \czf{} with full separation) \emph{also} derives $$\exists y\ (\lnot\psi \to \varphi(y))\text{.}$$ Our proof method is a variant of the famous Friedman-Dragalin~${A}$-translation adapted to the context of set theory in a hereditary manner.
\end{abstract}
\maketitle{}
	
\section{Introduction}\label{sec:intro}

In order to motivate the principle of independence of premise and explain its significance, let us consider Kreisel's modified realizability (cf.~\cite{Kreisel59, Kreisel62}): one of the most famous tools in theoretical proof extraction. Given a formula~${\varphi}$ that is provable in Heyting arithmetic in all finite types~${\haomega}$ (but also further axioms, see \cite[Theorem~3.4.5, Lemma~3.4.7]{Troelstra} and \cite[Theorem~5.8]{Kohlenbach}), we extract a term~${t}$ in G\"{o}del's~${T}$ that (provably) realizes the validity of~${\varphi}$. If our formula~${\varphi}$ is an existence statement~${\exists x\ \psi(x)}$, then~${t}$ comes with an explicit witness~${c}$ for~${x}$ and a realizer for~${\psi(c)}$. If our formula~${\varphi}$ is an implication~${\psi \to \chi}$, then~${t}$ is a functional that maps realizers of~${\psi}$ to realizers of~${\chi}$.

Over~${\haomega}$, the strength of~${\varphi}$ and the statement that~${\varphi}$ has a realizer may be distinct, even incomparable.\footnote{Let~${\varphi}$ be the statement that full choice and the consistency of~${\zfc{}}$ are equivalent. Since full choice has a trivial realizer and consistency is a~${\Pi^0_1}$-statement, the existence of a realizer for~${\varphi}$ is equivalent to the consistency of~${\zfc{}}$ alone. Over~${\haomega}$, this statement and~${\varphi}$ are incomparable.} Still, it is possible to enrich~${\haomega}$ with further axioms to ensure that both~${\varphi}$ and ``$\varphi$ has a realizer'' are equivalent (cf.~\cite[Theorem~3.4.8]{Troelstra} and \cite[Theorem~5.12]{Kohlenbach}). For the purposes of this article, our focus lies on the case involving implications. Let us, again, assume that~${\varphi}$ is an implication of the form~${\psi \to \chi}$. Our goal is to derive that~${\varphi}$ has a realizer. Inductively, we can assume that our claim is already satisfied for both~${\psi}$ and~${\chi}$. Thus, we are in the following situation:
\begin{equation*}
	\exists x\ x \mr{} \psi \to \exists y\ y \mr{} \chi\period
\end{equation*}
Here,~${x \mr{} \psi}$ and~${y \mr{} \chi}$ express that~${x}$ and~${y}$ are realizers for~${\psi}$ and~${\chi}$, respectively. In order to construct a realizer for~${\psi \to \chi}$, we aim to extract both~${\exists}$-quantifiers and then apply choice to build a functional out of the ensuing~${\forall\exists}$-statement. Our first step is simple: Prenexing the existential quantifier of our premise is intuitionistically justified:
\begin{equation*}
	\forall x.\ x \mr{} \psi \to \exists y\ y \mr{} \chi.
\end{equation*}
The~${\exists}$-quantifier in the conclusion, however, is problematic: In plain intuitionistic logic, but also~${\haomega}$, we are not allowed to move this quantifier over our premise.\footnote{Choose~${\chi := \exists x\ T(z, z, x)}$ and~${\psi := \lnot\lnot\chi}$ for Kleene's~${T}$ predicate. Conservativity (cf.~\cite[Corollary~4.2]{Goodman76}) yields a statement in~${\ha + \markov}$ that is not Kleene realizable (cf.~\cite{Kleene45}).} What we need is \emph{independence of premise} for implications with~${\exists}$-free premises or equivalently (as the presence of choice yields equivalences to negated formulas, cf.~\cite[Corollary~5.16]{Kohlenbach}), the following principle:
\begin{equation}\tag{$*$}
	(\lnot \psi \to \exists x\ \chi(x)) \to \exists x (\lnot \psi \to \chi(x))
\end{equation}
where~${x}$ may not occur freely in~${\psi}$.
This axiom, although not derivable in~${\haomega}$, can be justified both with respect to the BHK-interpretation and modified realizability itself by the fact that negated formulas have empty realizers. Hence, a (total) functional interpreting the implication in the premise takes no input and immediately comes with a witnessing candidate~${c}$ for~${x}$ such that~${\chi(c)}$ holds as soon as~${\lnot\psi}$ is ensured. The candidate~${c}$ is fully determined \emph{before} the validity of our premise~${\lnot\psi}$ comes into play. Independence of premise is not only relevant in the context of modified realizability but also for G\"odel's Dialectica interpretation. See~\cite[Definition~8.4]{Kohlenbach} as well as \cite[p.~127]{Kohlenbach} for more information.

When axiom schemes such as ($*$) are not provable, it is often still possible to derive related \emph{rules}: While~${\haomega{}}$ does not prove Markov's principle, it admits Markov's rule (cf.~\cite[Lemma~3.3]{Friedman78}). While it does not prove the scheme of full choice, it admits a choice rule (cf.~\cite[Corollary~5.24]{Kohlenbach}). In order to demonstrate a similar phenomenon with respect to ($*$), let us make the formulation of our rule precise:
\begin{definition}\label{def:IP}
	A theory~${T}$ admits the rule of \emph{independence of premise} for negated premises if any derivation of
	\begin{align*}
		T &\vdash \lnot \psi \to \exists x\ \chi(x)
		\shortintertext{yields a proof}
		T &\vdash \exists x.\ \lnot \psi \to \chi(x)
	\end{align*}
	where~${x}$ may not occur freely in~${\psi}$.
\end{definition}
At this point, it is probably not too much of a surprise that~${\haomega}$ also admits this rule.
The admissibility of independence of premise and full choice \emph{as rules} can be verified by simple exercises in the context of modified realizability (with truth, cf.~\cite[Corollary~5.24]{Kohlenbach}). However, the situation gets much more complex if we move from arithmetic to (constructive) set theories such as~${\czf{}}$ or~${\izf{}}$.

In \cite{FNR23}, Frittaion, Nemoto, and Rathjen study a realizability interpretation of~${\czf{}}$ (and several extensions) based on work by Rathjen, who originally involved truth into an adaptation of a definition by McCarty to derive disjunction and numerical existence properties for~${\czf{}}$. (See \cite[p.~282]{Rathjen06} and \cite[pp.~1235--1236]{Rathjen05} for a more detailed historical context.) Using this interpretation, the authors derive several rules involving finite types. In particular, they work on an independence of premise rule and prove the following result:
\begin{theorem*}
	Let~${T}$ be one of the theories in \cite[Theorem~4.11]{FNR23}, which includes both~${\czf{}}$ and~${\izf{}}$. Assume
	\begin{align*}
		T &\vdash \lnot\psi \to \exists y^\sigma \varphi(y)
		\shortintertext{where~${y}$ does not occur in~${\psi}$ and~${\sigma}$ is a finite type. Then, also}
		T &\vdash \exists y.\ \lnot \psi \to y \in \sigma \land \varphi(y)
	\end{align*}
	holds.
\end{theorem*}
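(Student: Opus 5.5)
The natural route is the one taken in \cite{FNR23}: a realizability interpretation with truth, in the style of Rathjen's adaptation of McCarty's realizability. I would fix a model of $T$ in which realizers are codes (e.g.\ indices of partial recursive functions or elements of a suitable applicative structure), and define for each formula $\theta$ a relation $e \Vdash \theta$ with a truth clause built in: $e \Vdash \alpha \to \beta$ means that $\alpha \to \beta$ holds and that for all $d$ with $d \Vdash \alpha$ the value $ed$ is defined and realizes $\beta$, and analogously for the other connectives. The first step is the soundness theorem: if $T \vdash \theta$, then $T$ proves that there is some $e$ with $e \Vdash \theta$. This requires checking every axiom of $T$ (extensionality, pairing, union, infinity, separation, collection/replacement, $\in$-induction, and so on), which is the bulk of \cite[Theorem~4.11]{FNR23}. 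The second step is the truth lemma: $e \Vdash \theta$ implies $\theta$, which is immediate from the built-in truth clauses. Together these show that realizability with truth is conservative, in the sense that anything derivable about realized statements is true.

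Next, the two specific features needed. For negated formulas, $e \Vdash \lnot\psi$ holds iff $\lnot\psi$ holds, because no realizer of $\psi$ can exist when $\lnot\psi$ is true (by the truth lemma), so the realizer is irrelevant. For finite types, I would show, by induction on $\sigma$, that for the canonical representation of objects of type $\sigma$ (natural numbers, and functions between lower types) a realizer of $\exists y^\sigma \varphi(y)$ effectively determines an actual object $y\in\sigma$ together with a realizer of $\varphi(y)$. This presupposes that the interpretation is set up so that realizers of type-$\sigma$ membership are tracked by the actual elements, i.e.\ numerals or codes that can be decoded inside $T$ into the element itself.

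Now let $T \vdash \lnot\psi \to \exists y^\sigma\varphi(y)$. By soundness, $T$ proves that some $e$ realizes this implication. Any fixed realizer $r_0$ (e.g.\ $0$) realizes $\lnot\psi$ whenever $\lnot\psi$ holds, so $e r_0$ is defined in that case. The key point is that the computation $e r_0$ does not depend on whether $\psi$ fails; its output is a code $c$ determined independently of the premise. Arguing in $T$, I take $y$ to be the decoded object of type $\sigma$ obtained from $c$. If $\lnot\psi$ holds, then $c$ realizes $\exists y^\sigma\varphi(y)$, which yields $y\in\sigma$ and a realizer of $\varphi(y)$, and hence, by the truth lemma, $y\in\sigma\land\varphi(y)$. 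This gives $\exists y.\ \lnot\psi \to y\in\sigma\land\varphi(y)$.

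The main obstacle lies in this last step. The value $er_0$ is only known to be defined under $\lnot\psi$, so $y$ cannot be defined outright. I would try the following: put $y := \{\, z : \lnot\psi \land z = \text{decode}(er_0)\,\}$-style sets, which exist by separation. For number types this works directly, because the decoding is unique once $\lnot\psi$ holds, and I would choose $y$ so that under $\lnot\psi$ it collapses to the intended element. For higher types, defining such a $y$ uniformly (by separation or union over a singleton-like set) and checking that it is the right object requires care. It is also exactly the reason the result is restricted to finite types $\sigma$, where objects are determined by realizers.
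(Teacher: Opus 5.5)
Your overall architecture matches what the paper attributes to \cite{FNR23}; the paper does not prove this theorem itself and only indicates the argument. The ingredients are realizability with truth, a trick quantifying over the contents of all functionals of type~$\sigma$, and the verification of $y\in\sigma$ only under $\lnot\psi$, because the relevant element of the pca is defined only then. The gap is the step you call the ``main obstacle'': it is the actual content of the theorem, and you leave it open. Your suggested fix also fails for the theories in question. A set like $\{z : \lnot\psi \land z = \mathrm{decode}(e\,r_0)\}$ has three problems:
\begin{itemize}
\item it needs a bounding set;
\item under $\lnot\psi$ it yields the singleton of the witness, not the witness;
\item above all, it is cut out by a formula containing the arbitrary, unbounded $\lnot\psi$, which requires full separation.
\end{itemize}
\czf{}, which is explicitly covered by the statement, has only bounded separation. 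So even for $\sigma=\mathbb{N}$ your construction fails in \czf{}; it only works in \izf{} or \czf{} with full separation.

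The missing idea is to define $y$ by a formula that does not mention $\psi$ at all. It should refer only to the possibly divergent application $e\,r_0$ and quantify boundedly over the contents of type-$\sigma$ functionals. For example, for $\sigma=\mathbb{N}\to\mathbb{N}$ with numerical realizers, put $y:=\{\langle n,m\rangle\in\omega\times\omega : e\,r_0\downarrow \text{ and its output sends } n \text{ to } m\}$. Convergence of a computation is $\Sigma^0_1$, so this condition is bounded (with parameter $\omega$), and $y$ exists by bounded separation, outright and with no reference to $\psi$. Only afterwards does $\lnot\psi$ enter: it gives $e\,r_0\downarrow$, and hence $y\in\sigma\land\varphi(y)$ by the truth lemma. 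This is exactly why $y\in\sigma$ has to stay behind the premise.

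Your other presupposition, that a realizer of $y\in\sigma$ determines $y$, also carries real weight at higher types and is unsupported in your sketch. With numerical realizers, a realizer of $y\in(\mathbb{N}^{\mathbb{N}}\to\mathbb{N})$ says nothing about the values of $y$ at non-recursive arguments. So ``decode'' is not even well defined there unless the realizability structure is chosen suitably.
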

Notice that~${y}$ is separated from its type annotation, which stays behind: The extraction of~${y}$ is possible due to the realizability notation together with a trick involving a quantification over the contents of all functionals of type~${\sigma}$ (see the proof of \cite[Theorem~8.5]{FNR23} for details). However, the verification of~${y \in \sigma}$ depends on a member of the employed partial combinatory algebra, which is only guaranteed to be defined when~${\lnot\psi}$ can be assumed. The question whether~${y \in \sigma}$ can be extracted is formulated as \cite[Problem~1.4]{FNR23}.
The question that \emph{this} article aims to answer is the subsequent \cite[Problem~1.5]{FNR23}:

``Is [Definition~\ref{def:IP}] an admissible rule of~${\czf{}}$ or any other familiar constructive/intuitionistic set theory~${T}$?''

Although our methods (regrettably) do not seem to work for~${\czf{}}$ in particular, we prove the rule from Definition~\ref{def:IP} for both~${\czf{}}$ \emph{with full separation} and~${\izf{}}$ in Theorem~\ref{thm:main}. Thus, we answer \cite[Problem~1.5]{FNR23} positively using well-established constructive set theories.

Instead of going through the route of realizability, we use a completely different approach: the Friedman-Dragalin translation (cf.~\cite{Friedman78, Dragalin80}). Traditionally, this method was used to derive Markov's rule (cf.~\cite[Lemma~3.3]{Friedman78}): Given formulas~${\varphi}$ and~${P}$, let~${\varphi^P}$ denote the formula that results from~${\varphi}$ if each prime formula~${s =_0 t}$ and~${\bot}$ is replaced with~${(s =_0 t) \lor P}$ and~${P}$, respectively. (Here, it is important that negations are expressed using implications and~${\bot}$.) In an effort to derive Markov's rule, assume that the double negation of a simple existence statement is derivable:
\begin{equation*}
	\haomega \vdash \lnot\lnot \exists x \varphi(x)
\end{equation*}
where quantifiers in~${\varphi(x)}$ are bounded. Our goal is to remove the double negation. Since logical connectives and bounded quantifiers can be expressed by primitive recursive functionals, we find a term~${t}$ in G\"{o}del's~${T}$ such that (provably in~${\haomega}$)~${\varphi(x)}$ is satisfied if and only if~${tx =_0 0}$ holds. Now, we apply the Friedman-Dragalin translation to the resulting statement and consider~${(\lnot\lnot \exists x\ tx =_0 0)^{\exists x \varphi(x)}}$. By \cite[Lemma~3.2]{Friedman78}, this translation is also provable in~${\haomega}$. Hence, we have
\begin{equation*}
	\haomega \vdash (\exists x\ (tx =_0 0 \lor \exists x\ \varphi(x)) \to \exists x\ \varphi(x)) \to \exists x\ \varphi(x)\period
\end{equation*}
Miraculously, this results in a derivation of
\begin{equation*}
	\haomega \vdash \exists x\ \varphi(x)\period
\end{equation*}
Seemingly out of thin air, a classical proof has been turned into a constructive one. See \cite[Chapter~14]{Kohlenbach} for applications and further literature.

Although the independence of premise rule traditionally is derived using modified realizability with truth, it is also possible to prove it using the Friedman-Dragalin translation. As it turns out, this idea is nothing new and, in the context of Heyting arithmetic, already appears in Albert Visser's PhD thesis (cf.~\cite[Section~3.1]{VisserPhD}, also \cite[Theorem~5.5 of Chapter~3]{TvD88}). Similar to the situation with Markov's rule, we aim to attack the negation in our premise. By choosing the right translation, our goal is to eliminate the premise completely, which trivially pulls the existential quantifier out. In an effort to derive independence of premise, assume
\begin{equation*}
	\haomega \vdash \lnot \psi \to \exists y\ \varphi(y)\period
\end{equation*}
This time, we apply the Friedman-Dragalin translation via the formula~${\lnot\lnot\psi}$. Using the same result as before, this yields
\begin{equation*}
	\haomega \vdash (\psi^{\lnot\lnot\psi} \to \lnot\lnot\psi) \to \exists y\ \varphi(y)^{\lnot\lnot\psi}\period
\end{equation*}
Our focus lies on the premise. Assume~${\psi^{\lnot\lnot\psi}}$ and~${\lnot\psi}$. By the latter, all occurrences of~${\lnot\lnot\psi}$ in~${\psi^{\lnot\lnot\psi}}$ vanish. This results in our original formula~${\psi}$. Now, by a second application of~${\lnot\psi}$, we derive~${\bot}$. Hence, our premise~${\psi^{\lnot\lnot\psi} \to \lnot\lnot\psi}$ is just a tautology and we arrive at
\begin{equation*}
	\haomega \vdash \exists y\ \varphi(y)^{\lnot\lnot\psi}\period
\end{equation*}
Now that our existential quantifier has been extracted, we are almost done: We only need to re-introduce the premise~${\lnot\psi}$. Similar to before, this eliminates any occurrence of~${\lnot\lnot\psi}$ in the formula~${\varphi(y)^{\lnot\lnot\psi}}$. Consequently, we arrive at our claim
\begin{equation*}
	\haomega \vdash \exists y.\ \lnot\psi \to \varphi(y)\period
\end{equation*}
The obvious question is: Can this approach be transferred to a constructive set theory such as~${\czf{}}$? Unfortunately, it is not that simple. In order to demonstrate the issue, assume that our Friedman-Dragalin translation of a formula~${\varphi}$ to~${\varphi^P}$ (for~${\varphi}$ and~${P}$ in the language of set theory) directly replaces all prime formulas~${x = y}$,~${x \in y}$, and~${\bot}$ in~${\varphi}$ with~${(x = y) \lor P}$,~${(x \in y) \lor P}$, and~${P}$, respectively. Then, consider the translation of extensionality, which is usually listed as the first axiom of~${\czf{}}$:
\begin{equation*}
	\forall z\ (z \in x \lor P \leftrightarrow z \in y \lor P) \to x = y \lor P\period
\end{equation*}
This formula cannot be derived for arbitrary~${P}$.\footnote{\label{ft:con_zfc}Let~${P}$ be the (bounded) formula stating the consistency of~${\zfc{}}$. Let~${x}$ be empty and~${y}$ nonempty precisely if~${P}$ holds. Then, the premise can easily be derived. However, by the disjunction property (cf.~\cite[Theorem~1.2]{Rathjen05}),~${\czf{}}$ now either proves or disproves the consistency of~${\zfc{}}$, which cannot be.} In fact, Friedman himself studied a variant of his translation for an intuitionistic set theory but had to abandon extensionality (as well as the full powerset axiom) for similar reasons (see the definition of the system~${Q}$ at \cite[p.~25]{Friedman78}).

The problem is that the disjunctions in the premise and in the conclusion involve distinct \emph{levels} of the sets~${x}$ and~${y}$: In the premise,~${P}$ is added to statements concerning \emph{elements} of our sets. In the conclusion,~${P}$ is added to statements regarding the objects~${x}$ and~${y}$ \emph{themselves}. What we need is a translation that does not add~${P}$ ``outside'' of~${x = y}$ but, instead, incorporates it into the \emph{definition} of equality itself so that the addition of~${P}$ still remains on the level of elements.

Consider the following recursive definition of predicates~${='}$ and~${\in'}$, which very much resembles how equality and set membership are handled in realizability interpretations of set theory (see, e.g., the first few lines of \cite[Definition~2.2]{McCarty86}). Notice that only the definition of~${\in'}$ refers to the original predicate~${\in}$:
\begin{align*}
	x =' y &:\Longleftrightarrow \forall z.\ (z \in' x \to z \in' y) \land (z \in' y \to z \in' x)\comma\\
	x \in' y &:\Longleftrightarrow \exists z \in y.\ x =' z\period
	\intertext{With the help of set induction, it is possible to show that~${='}$ and~${\in'}$ behave precisely as~${=}$ and~${\in}$, respectively. Hence, we can use this description as a starting point for our definition. For our translation, we will introduce~${P}$ into both predicates in a hereditary fashion. In fact, consider the following slight variation:}
	x =_P y &:\Longleftrightarrow \forall z.\ (z \in_P x \to z \in_P y) \land (z \in_P y \to z \in_P x)\comma\\
	x \in_P y &:\Longleftrightarrow P \lor \exists z \in y.\ x =_P z\period
\end{align*}
Given~${\varphi}$, we call the formula~${\varphi_P}$ that results from~${\varphi}$ by replacing all prime formulas~${x = y}$,~${x \in y}$, and~${\bot}$ with~${x =_P y}$,~${x \in_P y}$, and~${P}$, respectively, the \emph{hereditary Friedman-Dragalin translation} of~${\varphi}$ with respect to~${P}$ (see Definition~\ref{def:varphi_P}).
As communicated to us by Michael Rathjen, a hereditary translation in our sense has independently been studied by him and Takako Nemoto.

In Theorem~\ref{thm:translation}, we will see that whenever~${\varphi}$ can be derived in~${\czf{}}$ with full separation or in~${\izf}$, then~${\varphi_P}$ can also be proved in the same system. Finally, in Theorem~\ref{thm:main}, we will use this result to show that independence of premise in the sense of Definition~\ref{def:IP} is admissible over both systems, answering \cite[Problem~1.5]{FNR23} by Frittaion, Nemoto, and Rathjen.

In Section~\ref{sec:prelims}, we begin by recalling the axioms of~${\czf{}}$ and~${\izf{}}$. In Section~\ref{sec:main}, we show how~${=_P}$ and~${\in_P}$ may be constructed in~${\czf{}}$, prove the admissibility of our hereditary translation and derive the independence of premise rule in the sense of Definition~\ref{def:IP}. Finally, in Section~\ref{sec:discuss}, we discuss why our method does not seem to work in a setting with only bounded separation.

\section{Preliminaries}\label{sec:prelims}

We are working in intuitionistic first-order logic with equality featuring the binary predicate symbol~${\in}$. We mostly use Greek symbols~${\varphi}$,~${\psi}$,~${\chi}$ for formulas although we also write~${P}$ for our translation parameter. We use both lower- and uppercase Latin symbols (except for~${P}$) as variables. Our logical system includes a symbol~${\bot}$ for falsity and negation~${\lnot\varphi}$ is read as an abbreviation for~${\varphi \to \bot}$. We also write~${x \notin y}$ in order to express~${\lnot x \in y}$. For readability, we often use dots after quantifiers instead of parentheses around the rest of the respective formula.

We write~${\forall x \in y\ \varphi}$ and~${\exists x \in y\ \varphi}$ in order to abbreviate both~${\forall x.\ x \in y \to \varphi}$ and~${\exists x.\ x \in y \land \varphi}$, respectively. We call them \emph{bounded quantifiers} and formulas in which each occurrence of a quantifier is bounded are called \emph{bounded formulas}.

We continue with the additional axioms and schemes required for defining \czf{} and \izf{}. We are using definitions from \cite[Section~2]{FNR23} in order to stay close to that article but also because these formulations seem to behave rather nicely with our arguments.
\begin{itemize}
	\item Extensionality: $\forall z\ (z \in x \leftrightarrow z \in y) \to x = y$
	\item Pairing: $\exists z.\ x \in z \land y \in z$
	\item Union: $\exists y \forall z.\ \exists w\ (z \in w \land w \in x) \to z \in y$
	\item Infinity: $\exists w \forall x.\ x \in w \leftrightarrow \forall y\ y \notin x \lor \exists y \in w\ \forall z.\ z \in x \leftrightarrow z \in y \lor z = y$
	\item Set induction: $\forall x\ (\forall y \in x\ \varphi(y) \to \varphi(x)) \to \forall x\ \varphi(x)$ for all formulas~${\varphi(x)}$
	\item Bounded separation: $\exists y \forall z.\ z \in y \leftrightarrow z \in x \land \varphi(z)$ for all bounded~${\varphi(z)}$
	\item Strong collection: $\forall y \in x\ \exists z\ \varphi(y, z) \to \exists w.$\\
	\phantom{}\hfill$\forall y \in x\ \exists z \in w\ \varphi(y, z) \land \forall z \in w\ \exists y \in x\ \varphi(y, z)$
	\item Subset collection: $\exists z \forall u.\ \forall v \in x\ \exists w \in y\ \varphi(v, w, u) \to \exists y' \in z.$\\
	\phantom{}\hfill$\forall v \in x\ \exists w \in y'\ \varphi(v, w, u) \land \forall w \in y'\ \exists v \in x\ \varphi(v, w, u)$
	\item Full separation: $\exists y \forall z.\ z \in y \leftrightarrow z \in x \land \varphi(z)$
	\item Powerset: $\exists y \forall z.\ z \subseteq x \to z \in y$
\end{itemize}
The system \czf{} consists of all these axioms except for the last two: full separation and the powerset axiom. The system \izf{} consists of all axioms (although bounded separation and subset collection are subsumed by full separation and the powerset axiom, respectively).

\section{A hereditary Friedman-Dragalin translation}\label{sec:main}

Given a sentence\footnote{Many of the following results can also be derived for \emph{formulas}~${P}$. However, Corollary~\ref{cor:repl_ex_falso} becomes false if extended to arbitrary formulas~${P}$. Thus, our main argument in Theorem~\ref{thm:main} is designed to only require results concerning \emph{sentences}~${P}$.}~${P}$, let us recall the intended properties of both~${=_P}$ and~${\in_P}$ from the introduction:
\begin{align*}
	x =_P y &\Longleftrightarrow \forall z.\ (z \in_P x \to z \in_P y) \land (z \in_P y \to z \in_P x)\comma\\
	x \in_P y &\Longleftrightarrow P \lor \exists u \in y.\ x =_P u\period
\end{align*}
Although such definitions are routine in \czf{} (cf.~\cite[Section~9.3]{AR10} and~\cite[Section~2.4]{Rathjen23}), we make the construction explicit in order to stay more transparent and self-contained. First, we focus on the definition of~${=_P}$:
\begin{definition}
	For any sentence~${P}$, we define
	\begin{align*}
		x =_P y :\equiv \exists A.\ (x, y) \in A \land \forall (u, v) \in A.\ &\forall a \in u\ (P \lor \exists b \in v\ (a, b) \in A) \land{}\\
		&\forall b \in v\ (P \lor \exists a \in u\ (a, b) \in A)\period
	\end{align*}
\end{definition}
Notice that in the relevant conjunction of our formula,~${A}$ only appears positively. Compare this to the concept of \emph{inductive definitions} (cf.~\cite[Section~12.1]{AR10} and~\cite[Section~2.4]{Rathjen23}).
\begin{lemma}\label{lem:equiv_rel}
	For any sentence~${P}$, the system \czf{} proves that~${=_P}$ is an equivalence relation.
\end{lemma}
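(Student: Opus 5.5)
Call a set $A$ of ordered pairs a \emph{$P$-bisimulation} if it satisfies the clause in the definition, i.e.\ for all $(u,v)\in A$ we have $\forall a \in u\ (P \lor \exists b \in v\ (a,b)\in A)$ and $\forall b\in v\ (P\lor \exists a\in u\ (a,b)\in A)$. Then $x =_P y$ says exactly that some $P$-bisimulation contains $(x,y)$. The plan is to prove reflexivity, symmetry and transitivity by exhibiting a suitable $P$-bisimulation in each case, using only pairing, union, bounded separation, strong collection and set induction. Since $P$ is a sentence it may occur as a parameter in the formulas we collect over, and no separation on $P$ will be needed.

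\emph{Symmetry.} Given a $P$-bisimulation $A\ni(x,y)$, take the inverse $A^{-1}=\{(v,u) : (u,v)\in A\}$. It exists by replacement, which follows from strong collection together with bounded separation on a suitable product-type set; alternatively collect $(v,u)$ over $(u,v)\in A$ and separate by the bounded condition ``$\exists (u,v)\in A$ with $p=(v,u)$''. The two clauses simply swap roles, so $A^{-1}$ is a $P$-bisimulation containing $(y,x)$.

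\emph{Transitivity.} Given $A\ni(x,y)$ and $B\ni(y,z)$, form the relational composition $B\circ A=\{(u,w) : \exists v\ (u,v)\in A\land (v,w)\in B\}$. All the relevant $v$ lie in $\bigcup\bigcup A$, so this is a bounded separation from a product of sets obtained from $\bigcup\bigcup A$ and $\bigcup\bigcup B$. Products exist in \czf{} via strong collection. To verify the clause, take $(u,w)$ with witness $v$ and $a\in u$. Then $A$ gives $P$ or some $b\in v$ with $(a,b)\in A$. In the second case, $B$ applied to $b\in v$ gives $P$ or some $c\in w$ with $(b,c)\in B$, hence $(a,c)\in B\circ A$. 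The disjunction with $P$ propagates intuitionistically. The other clause is symmetric.

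\emph{Reflexivity.} This is the step I expect to be the main obstacle, since there is no global identity relation; we need a set-sized $P$-bisimulation containing $(x,x)$. I would take the transitive closure $\mathrm{TC}(\{x\})$, whose existence in \czf{} is standard via set induction and strong collection, and set $A=\{(u,u) : u\in \mathrm{TC}(\{x\})\}$. Then $(x,x)\in A$. For $(u,u)\in A$ and $a\in u$, transitivity of the closure gives $a\in\mathrm{TC}(\{x\})$, so $(a,a)\in A$ and we witness the right disjunct with $b=a$; $P$ is never needed. If one wants to avoid transitive closures, one can instead prove $\forall x\,\exists A\,(\ldots)$ by set induction. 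Collect bisimulations $A_a$ for $a\in x$ using strong collection, take their union, and add the pair $(x,x)$; the union of $P$-bisimulations is again one, and the clause at $(x,x)$ is witnessed by $(a,a)\in A_a$.
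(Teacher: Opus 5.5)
Your proof is correct and follows the same route as the paper. Reflexivity uses the diagonal on $\mathrm{TC}(\{x\})$, symmetry uses the inverse relation, and transitivity uses relational composition, with the same case analysis on the disjunctions with $P$. The differences are minor: you build the composition by bounded separation from $\bigcup\bigcup A \times \bigcup\bigcup B$, whereas the paper collects the subsingletons $\{(u,w) \mid v_a = v_b\}$ over $A \times B$; and your alternative set-induction argument for reflexivity is a valid variant that avoids transitive closures.
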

\begin{proof}
	We begin with reflexivity. Let~${x}$ be arbitrary and let~${A}$ consist of precisely those pairs~${(u, u)}$ for which~${u}$ lies in the transitive closure of~${\{x\}}$. (The set~${A}$ can be constructed using strong collection.) Immediately, this entails~${(x, x) \in A}$. Moreover, consider a pair~${(u, u) \in A}$ and let~${a \in u}$ be arbitrary. By assumption,~${u}$ lies in the transitive closure of~${\{x\}}$, which must, therefore, also hold for~${a}$. Hence, we choose~${b := a \in u}$, which satisfies~${(a, b) \in A}$. We conclude~${x =_P x}$.
	
	Let us continue with symmetry. Assume that~${A}$ witnesses~${x =_P y}$. Using strong collection, we may define~${B}$ to be the set consisting of precisely those pairs~${(v, u)}$ with~${(u, v) \in A}$. Clearly,~${(x, y) \in A}$ entails~${(y, x) \in B}$. Now, assume~${(v, u) \in B}$, which implies~${(u, v) \in A}$. Let~${b \in v}$ be arbitrary, then either~${P}$ holds, or there is some~${a \in u}$ with~${(a, b) \in A}$. Consequently, either~${P}$ holds or there is some~${a \in u}$ with~${(b, a) \in B}$. The other half of the conjunction works analogously.
	
	Finally, we prove transitivity. Assume that~${x =_P y}$ and~${y =_P z}$ hold, witnessed by sets~${A}$ and~${B}$, respectively. Let~${C}$ be the set consisting precisely of those pairs~${(u, w)}$ for which there exists~${v}$ with~${(u, v) \in A}$ and~${(v, w) \in B}$. In order to construct~${C}$, take the product of~${A}$ and~${B}$, and consider the singleton~${\{(u, w)\}}$ for each~${((u, v_a), (v_b, w)) \in A \times B}$. With the help of bounded separation, we may even consider the subsingleton~${\{(u, w) \mid v_a = v_b\}}$. Using strong collection, we construct the family of all such sets, whose union results in~${C}$.
	
	From~${(x, y) \in A}$ and~${(y, z) \in B}$, it is clear that~${(x, z) \in C}$ holds. Let~${(u, w) \in C}$. By definition, there is~${v}$ with~${(u, v) \in A}$ and~${(v, w) \in B}$. From the former together with the definition of~${A}$, we know that for each~${a \in u}$, we have~${P}$ or there is~${b \in v}$ with~${(a, b) \in A}$. If~${P}$ holds, we are done. Otherwise, if~${b \in v}$ with~${(a, b) \in A}$ exists, we employ both~${(v, w) \in B}$ and the definition of~${B}$: Here, we know that for our~${b \in v}$, we have~${P}$ or there is~${c \in w}$ with~${(b, c) \in B}$. If~${P}$ holds, we are done as before. If~${c \in w}$ with~${(b, c) \in B}$ exists, we conclude that for our~${a \in u}$ from the beginning, there is~${c \in w}$ with~${(a, c) \in C}$, by definition of~${C}$. The other direction, i.e.~the statement~${\forall c \in w.\ P \lor \exists a \in u\ (a, c) \in C}$ is proved analogously.
\end{proof}

\begin{lemma}\label{lem:ex_falso_etc}
	For any sentences~${P}$ and~${Q}$, the system \czf{} proves
	\begin{enumerate}[label=\alph*)]
		\item~${P \to x =_P y}$,
		\item~${(P \to Q) \to x =_P y \to x =_Q y}$,
		\item~${x =_\bot y \leftrightarrow x = y}$,
	\end{enumerate}
	for all sets~${x}$ and~${y}$.
\end{lemma}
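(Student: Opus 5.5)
Each part is handled by choosing or reusing the witnessing set~${A}$ from the definition of~${=_P}$.

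For a), we assume~${P}$ and take~${A := \{(x, y)\}}$, which exists by pairing. Then~${(x, y) \in A}$ holds, and the only pair in~${A}$ is~${(x, y)}$. Every clause $\forall a \in u\ (P \lor \dots)$ and $\forall b \in v\ (P \lor \dots)$ is satisfied through the left disjunct~${P}$. Hence~${x =_P y}$.

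For b), we assume~${P \to Q}$ and let~${A}$ witness~${x =_P y}$. We show that the same~${A}$ witnesses~${x =_Q y}$. Take~${(u, v) \in A}$ and~${a \in u}$. Then either~${P}$ holds, which yields~${Q}$, or there is some~${b \in v}$ with~${(a, b) \in A}$. Either way we get~${Q \lor \exists b \in v\ (a, b) \in A}$. The other conjunct is symmetric. This is exactly the positivity remark after the definition: both~${P}$ and~${A}$ occur only positively, so monotonicity in~${P}$ is immediate.

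For c), we argue in two directions.

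\emph{From right to left.} Assume~${x = y}$. By reflexivity from Lemma~\ref{lem:equiv_rel}, applied with the sentence~${\bot}$, we have~${x =_\bot x}$. Substitution of equals then gives~${x =_\bot y}$.

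\emph{From left to right.} This is the main step. Since~${\bot \lor \chi}$ is equivalent to~${\chi}$, a witness~${A}$ for~${x =_\bot y}$ is a bisimulation. That is, for~${(u, v) \in A}$, every~${a \in u}$ has some~${b \in v}$ with~${(a, b) \in A}$, and every~${b \in v}$ has some~${a \in u}$ with~${(a, b) \in A}$.

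We prove by set induction on~${u}$ the statement
$$\forall v\ \forall A.\ A \text{ is such a bisimulation} \land (u, v) \in A \to u = v.$$
Fix~${u}$, a bisimulation~${A}$, and~${v}$ with~${(u, v) \in A}$; we use extensionality.
\begin{itemize}
\item Let~${z \in u}$. There is~${b \in v}$ with~${(z, b) \in A}$. The induction hypothesis for~${z \in u}$ gives~${z = b}$, hence~${z \in v}$.
\item Let~${z \in v}$. There is~${a \in u}$ with~${(a, z) \in A}$. The induction hypothesis for~${a}$ gives~${a = z}$, hence~${z \in u}$.
\end{itemize}
Extensionality then yields~${u = v}$.

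The only subtlety is quantifying over~${A}$ inside the induction formula. This is harmless, because set induction is available for all formulas. Note also that the induction runs on the first coordinate alone, and the inner quantification over all~${v}$ makes the hypothesis strong enough.
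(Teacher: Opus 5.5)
Your proposal is correct and follows the paper's proof essentially step by step: the singleton witness $\{(x, y)\}$ for a), reuse of the same witness via positivity of $P$ for b), and, for the nontrivial direction of c), set induction along the first coordinate combined with extensionality. The only cosmetic difference is that the paper fixes the witness $A$ before the induction, whereas you quantify over all bisimulations $A$ inside the induction formula; this is harmless but not needed.
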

Together, b) and c) lead to the implication~${\lnot P \to x =_P y \to x = y}$.
\begin{proof}
	For a), assume that~${P}$ holds. We claim that already~${A := \{(x, y)\}}$ witnesses the claimed equality: First,~${(x, y) \in A}$ is satisfied. Moreover, for any~${(u, v) \in A}$ and~${a \in u}$, we have~${P}$. Similarly for any~${b \in v}$.
	
	Item b) follows from the fact that~${P}$ only occurs positively in the formula~${x =_P y}$.
	For c), assume that~${x =_\bot y}$ holds, witnessed by a set~${A}$. We claim that~${(u, v) \in A}$ entails~${u = v}$. Assume that the implication~${(a, b) \in A \to a = b}$ has already been established for all sets~${a}$ and~${b}$ with~${a \in u}$. Then, for any~${a \in u}$, there is~${b \in v}$ with~${(a, b) \in A}$. By assumption,~${a \in u}$ and~${(a, b) \in A}$ lead to~${a = b}$. Hence,~${a = b \in v}$ holds. Conversely, for any~${b \in v}$, there is~${a \in u}$ with~${(a, b) \in A}$, and the same argument implies~${b = a \in u}$. Thus,~${u = v}$ follows.
	By set induction along~${u}$, we conclude that~${(u, v) \in A}$ entails~${u = v}$. Together with~${(x, y) \in A}$, we arrive at our original claim.
	The direction from~${x = y}$ to~${x =_\bot y}$ is trivial.
\end{proof}
With equality in place, the definition of set membership follows easily. In Proposition~\ref{prop}, we will see that our definition is ``correct'' in the sense that it satisfies extensionality with respect to~${=_P}$.
\begin{definition}\label{def:x_inP_y}
	For any sentence~${P}$, we define
	\begin{equation*}
		x \in_P y :\equiv P \lor \exists u \in y.\ x =_P u\period
	\end{equation*}
\end{definition}
\begin{lemma}\label{lem:properties_in}
	For any sentence~${P}$, the system \czf{} proves
	\begin{enumerate}[label=\alph*)]
		\item~${x \in y \to x \in_P y}$,
		\item~${x =_P y \to y \in_P z \to x \in_P z}$,
		\item~${x \in_P y \to y =_P z \to x \in_P z}$,
	\end{enumerate}
	for all sets~${x}$,~${y}$, and~${z}$.
\end{lemma}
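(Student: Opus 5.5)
All three items follow directly from Definition~\ref{def:x_inP_y} together with Lemma~\ref{lem:equiv_rel}. In each case the argument splits on the disjunction in the definition of~${\in_P}$. The only real ingredients are reflexivity, symmetry and transitivity of~${=_P}$, plus one unfolding of the definition of~${=_P}$ in item~c).

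For a), assume~${x \in y}$. By reflexivity of~${=_P}$ (Lemma~\ref{lem:equiv_rel}) we have~${x =_P x}$, so~${u := x}$ witnesses~${\exists u \in y.\ x =_P u}$, and hence~${x \in_P y}$.

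For b), assume~${x =_P y}$ and~${y \in_P z}$, that is,~${P \lor \exists u \in z.\ y =_P u}$. If~${P}$ holds, then~${x \in_P z}$ holds trivially. Otherwise there is~${u \in z}$ with~${y =_P u}$. Transitivity gives~${x =_P u}$, and the same~${u}$ witnesses~${x \in_P z}$.

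Item c) is the only one that needs more than the equivalence-relation properties, since it concerns replacing the \emph{right-hand} side of~${\in_P}$. Assume~${x \in_P y}$ and~${y =_P z}$. If~${P}$ holds we are done. Otherwise there is~${u \in y}$ with~${x =_P u}$. Unfold~${y =_P z}$ to get a witness set~${A}$ with~${(y, z) \in A}$. The defining clause of~${A}$, applied to the pair~${(y, z)}$ and the element~${u \in y}$, yields~${P \lor \exists b \in z\ (u, b) \in A}$. In the case~${P}$ we are again done. Otherwise we have~${b \in z}$ with~${(u, b) \in A}$.

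The main point is then to see that~${(u, b) \in A}$ already gives~${u =_P b}$. This holds because the \emph{same} set~${A}$ witnesses~${u =_P b}$: it contains~${(u, b)}$ and satisfies the closure condition for all of its pairs, and that condition does not depend on the distinguished pair. Hence~${u =_P b}$, and transitivity with~${x =_P u}$ gives~${x =_P b}$ with~${b \in z}$, so~${x \in_P z}$. This observation, that any pair in a witness set is itself~${=_P}$-related, is the step I expect to need stating carefully. Everything else is routine case distinction on~${P}$, which is legitimate in \czf{} because we only ever use the given disjunctions and never decide~${P}$.
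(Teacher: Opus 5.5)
Your proof is correct and follows the paper's argument essentially step for step. You use reflexivity for a) and disjunction elimination plus transitivity for b). For c) you unfold a witness $A$ of $y =_P z$, note that $A$ itself witnesses $u =_P b$ for the pair $(u,b) \in A$, and finish with transitivity.
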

\begin{proof}
	Item a) follows from the reflexivity of~${=_P}$.
	For b), assume~${x =_P y}$ and~${y \in_P z}$. If~${P}$ holds, we are done. If there is~${u \in z}$ with~${y =_P u}$, then also~${x =_P u}$ follows by transitivity. We arrive at~${x \in_P z}$.
	
	For c), assume~${x \in_P y}$ and~${y =_P z}$. If~${P}$ holds, we are done. Assume that there is~${u \in y}$ with~${x =_P u}$. Let~${A}$ witness~${y =_P z}$. Then,~${u \in y}$ leads to~${P}$ (and we are done, as before) or there is~${v \in z}$ with~${(u, v) \in A}$. In particular,~${A}$ is now also a witness for~${u =_P v}$. By transitivity, we derive~${x =_P v}$. We conclude that there is~${v \in z}$ with~${x =_P v}$. Thus, our claim~${x \in_P z}$ is satisfied.
\end{proof}

\begin{proposition}\label{prop}
	For any sentence~${P}$, the system \czf{} proves
	\begin{equation*}
		x =_P y \longleftrightarrow \forall z.\ (z \in_P x \to z \in_P y) \land (z \in_P y \to z \in_P x)\period
	\end{equation*}
\end{proposition}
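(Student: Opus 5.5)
The left-to-right direction follows from Lemma~\ref{lem:properties_in}~c) and the symmetry of~${=_P}$ from Lemma~\ref{lem:equiv_rel}. If~${x =_P y}$ and~${z \in_P x}$, then c) gives~${z \in_P y}$. If~${z \in_P y}$, then~${y =_P x}$ by symmetry, and c) gives~${z \in_P x}$.

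For the right-to-left direction, assume the extensional condition holds for all~${z}$. The plan is to build an explicit witness~${A}$ for~${x =_P y}$. Every pair in the desired set would satisfy the extensional condition itself, but the other pairs must stay inside sets in order to be collected. So we take
\begin{equation*}
	A := \{(x, y)\} \cup \{(a, b) \mid a \in \mathrm{TC}(x) \cup \mathrm{TC}(y),\ b \in \mathrm{TC}(x) \cup \mathrm{TC}(y),\ a =_P b\}\period
\end{equation*}
Here~${\mathrm{TC}}$ is the transitive closure. The second set needs separation by the unbounded formula~${a =_P b}$, which \czf{} lacks. To avoid this, we instead take the union of~${\{(x,y)\}}$ with~${\bigcup \{ B_{a,b} \mid \ldots\}}$, where~${B_{a,b}}$ is a witness for~${a =_P b}$. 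We obtain this family by strong collection applied to the statement "for every pair~${(a,b)}$ with~${a =_P b}$ there is a witness". This still requires ranging over a set of pairs satisfying~${a =_P b}$, so the cleaner approach is the following. For each~${a \in x}$, the hypothesis with~${z := a}$ yields~${a \in_P x}$ by Lemma~\ref{lem:properties_in}~a), hence~${a \in_P y}$, i.e.\ either~${P}$ holds or there are~${b \in y}$ and a witness~${B}$ for~${a =_P b}$. Strong collection applied to~${\forall a \in x\ \exists B.\ P \lor (\exists b \in y\ (a,b) \in B \land B \text{ witnesses } a=_P b)}$ gives a set~${W_1}$. Symmetrically, collecting over~${b \in y}$ gives~${W_2}$. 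We then set~${A := \{(x,y)\} \cup \bigcup W_1 \cup \bigcup W_2}$. To make the formula inside strong collection well-behaved, the existential~${\exists B}$ should cover the~${P}$ case too: when~${P}$ holds, any~${B}$ (say~${\emptyset}$) works.

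Finally, we verify that~${A}$ is a witness. Clearly~${(x,y) \in A}$. For the pair~${(x,y)}$: given~${a \in x}$, either~${P}$ holds or there is~${b \in y}$ with~${(a,b)}$ in some~${B \in W_1}$, hence~${(a,b) \in A}$; the case~${b \in y}$ is symmetric using~${W_2}$. For any other pair~${(u,v) \in A}$, we have~${(u,v) \in B}$ for some witness~${B \in W_1 \cup W_2}$. The closure conditions of~${B}$ produce pairs in~${B \subseteq A}$. Witness sets are closed under their own conditions, and a union of such sets is again closed, because each condition only asks for membership in the set.

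The main obstacle is the bookkeeping in the strong collection step. The collected~${B}$'s must be genuine witnesses, or else the case "P" must be carried uniformly. The invariant to check is that every~${B \in W_1 \cup W_2}$ satisfies the closure condition, or that~${P}$ holds. In the latter case every closure requirement is trivial anyway, because each requirement is a disjunction with~${P}$.
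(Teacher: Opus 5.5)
Your proposal is correct and takes essentially the same route as the paper: strong collection gives, for each $a \in x$ (and each $b \in y$), a set that is either irrelevant because $P$ holds or a genuine witness of $a =_P b$; you take the unions, adjoin $(x,y)$, and verify the closure conditions by a case split on $P$. The differences are cosmetic: you allow an arbitrary $B$ in the $P$-case where the paper insists on $B = \emptyset$, and in the step producing $W_2$ you tacitly need the symmetry of $=_P$ from Lemma~\ref{lem:equiv_rel} to turn a witness of $b =_P a$ into one containing the pair $(a,b)$, which the paper states explicitly.
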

\begin{proof}
	The direction from left to right follows from Lemmas~\ref{lem:equiv_rel} and~\ref{lem:properties_in}. For the other direction, assume that the right hand side holds. For any~${z \in x}$, we have~${z \in_P x}$ by Lemma~\ref{lem:properties_in} and, hence,~${z \in_P y}$ by assumption. In this case, either~${P}$ holds or there exist sets~${A}$ and~${u \in y}$ such that~${A}$ witnesses~${z =_P u}$. This can be reformulated to the following statement: For any~${z \in x}$, there exists a set~${A}$ such that either~${A = \emptyset}$ and~${P}$ holds or there is~${u \in y}$ such that~${A}$ witnesses~${z =_P u}$.
	
	Now, we apply strong collection, which yields a set~${B}$ such that for any~${z \in x}$ an~${A}$ as above can be found in~${B}$ and, conversely, for any~${A \in B}$ there is some~${z \in x}$ satisfying the statement. Let~${C}$ be the union of~${B}$. Furthermore, let~${C'}$ be the set that results from a similar construction applied to~``${z \in_P y \to z \in_P x}$'': For any~${z \in y}$, there exists a set~${A' \in B'}$ such that either~${A' = \emptyset}$ and~${P}$ holds or there is~${u \in x}$ such that~${A'}$ witnesses~${u =_P z}$. (Here, we applied symmetry from Lemma~\ref{lem:equiv_rel} to the equality~${z =_P u}$.) Moreover,~${B'}$ only consists of~${A'}$ for which a~${z \in y}$ with this property exists. Then, define~${C'}$ to be the union of~${B'}$.	
	Finally, we define~${D := \{(x, y)\} \cup C \cup C'}$ and claim that this set witnesses~${x =_P y}$.
	
	Trivially,~${(x, y) \in D}$ holds. Now, let~${(u, v) \in D}$ be arbitrary. There are three cases:~${(u, v) = (x, y)}$, or one of~${(u, v) \in C}$ or~${(u, v) \in C'}$. In the first case, the following holds by definition of~${B}$: For any~${a \in u = x}$, there is~${A \in B}$ with either~${A = \emptyset}$ and~${P}$ or there is~${b \in v = y}$ such that~${A}$ witnesses~${a =_P b}$. In particular, for any~${a \in u}$, (there is~${A \in B}$ such that) we have~${P}$ or there is~${b \in v}$ with~${(a, b) \in A \subseteq C \subseteq D}$. Similarly, for any~${b \in v = y}$, (there is~${A' \in B'}$ such that) we have~${P}$ or there is~${a \in u = x}$ with~${(a, b) \in A' \subseteq C' \subseteq D}$. 
	
	In the second case,~${(u, v) \in C}$ holds. Here, there is~${A \subseteq C}$ with~${(u, v) \in A}$ such that~${A}$ is witness of an equality~${=_P}$. Hence, for all~${a \in u}$, we have~${P}$ or some~${b \in v}$ with~${(a, b) \in A \subseteq C \subseteq D}$. Conversely, for all~${b \in v}$, we have~${P}$ or some~${a \in u}$ with~${(a, b) \in A \subseteq C \subseteq D}$. The situation is similar for~${(u, v) \in C'}$ as this set is also a union of witnesses of~${=_P}$.
\end{proof}
The argument reveals that the claim of Proposition~\ref{prop} still holds if we replace~${z \in_P x}$ and~${z \in_P y}$ in the premises (but, importantly, not the conclusions) with~${z \in x}$ and~${z \in y}$, respectively.
\begin{definition}\label{def:varphi_P}
	Given a formula~${\varphi}$ and a sentence~${P}$, we write~${\varphi_P}$ for the formula that results from~${\varphi}$ if all occurrences of~${=}$,~${\in}$, and~${\bot}$, respectively, are replaced with~${=_P}$,~${\in_P}$, and~${P}$.
\end{definition}

\begin{corollary}\label{cor:repl_ex_falso}
	Given formulas~${\varphi(x)}$ and~${\psi}$ together with a sentence~${P}$, we derive the following:
	\begin{enumerate}[label=\alph*)]
		\item~${x =_P y \to \varphi_P(x) \to \varphi_P(y)}$,
		\item~${P \to \psi_P}$,
		\item~${\lnot P \to (\psi_P \leftrightarrow \psi)}$,
	\end{enumerate}
	for all~${x}$ and~${y}$.\footnote{Let us make explicit that neither~${x}$ nor~${y}$ is bounded in~${\varphi(x)}$.}
\end{corollary}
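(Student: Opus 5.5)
All three items go by induction on the build-up of the formula, with each item reducing to a fact already established for the prime formulas $=_P$ and $\in_P$. Throughout we work in \czf{}. Negation is an abbreviation for implication into $\bot$, so the only logical connectives are $\land$, $\lor$, $\to$, $\forall$, $\exists$ together with $\bot$.

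For a), we induct on the complexity of $\varphi$, proving the claim simultaneously for all free variables. This simultaneity is needed because $x$ may occur at several positions and alongside other parameters.
\begin{itemize}
	\item Atomic $\varphi$ is the only substantive case. If the atom is $u = w$, it becomes $u =_P w$, and replacing $x$ by $y$ on either side is handled by symmetry and transitivity from Lemma~\ref{lem:equiv_rel}. If the atom is $u \in w$, it becomes $u \in_P w$. Replacement on the left is Lemma~\ref{lem:properties_in}~b), and replacement on the right is Lemma~\ref{lem:properties_in}~c).
	\item The atom $\bot$ becomes $P$, which does not mention $x$.
	\item $\land$, $\lor$, $\forall$ and $\exists$ go through directly from the induction hypothesis, after renaming bound variables so that they avoid $x$ and $y$.
	\item For $\to$, we also need the converse direction of the hypothesis for the antecedent, from $\varphi_P(y)$ to $\varphi_P(x)$. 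We get it by applying the induction hypothesis with the symmetric equality $y =_P x$, again using Lemma~\ref{lem:equiv_rel}.
\end{itemize}
This is why the induction is stated for arbitrary $x$ and $y$.

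For b), we induct on $\psi$.
\begin{itemize}
	\item Atoms: $P \to u =_P w$ is Lemma~\ref{lem:ex_falso_etc}~a). $P \to u \in_P w$ holds because $P$ is the first disjunct in Definition~\ref{def:x_inP_y}. For $\bot$, whose translation is $P$ itself, it is trivial.
	\item $\land$, $\lor$ and $\to$ follow immediately. For implication, $P \to \chi_P$ already yields $P \to (\theta_P \to \chi_P)$.
	\item $\forall$ is fine because $P$ is a \emph{sentence}, so $P \to \forall z\, \chi_P$ follows from $\forall z(P \to \chi_P)$ without any variable clash. This is exactly where the footnote's restriction to sentences matters.
	\item $\exists$ needs a witness, and any set will do, for instance the one given by pairing or infinity. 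Intuitionistic first-order logic assumes a nonempty domain, so $\exists z\, \chi_P$ follows from $\chi_P(z)$ for an arbitrary $z$.
\end{itemize}

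For c), we assume $\lnot P$ and induct on $\psi$, showing $\psi_P \leftrightarrow \psi$ for all values of the free variables. Since $P$ is a sentence, the assumption $\lnot P$ may be carried under quantifiers.
\begin{itemize}
	\item For $\bot$, the claim $P \leftrightarrow \bot$ is exactly $\lnot P$.
	\item For equality, Lemma~\ref{lem:ex_falso_etc}~b) with $Q := \bot$ turns $\lnot P$ into $u =_P w \to u =_\bot w$. Then c) of that lemma gives $u = w$. The converse is reflexivity together with logical substitution.
	\item For membership, $u \in w \to u \in_P w$ is Lemma~\ref{lem:properties_in}~a). Conversely, $u \in_P w$ means $P \lor \exists v \in w\, (u =_P v)$. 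The disjunct $P$ is excluded, and the equality case gives $u = v \in w$ by the previous item.
	\item The connectives and quantifiers are closed under logical equivalence, so they follow from the induction hypothesis.
\end{itemize}

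The main obstacle is bookkeeping rather than mathematics: setting up a) strongly enough to handle the implication case, and making sure the quantifier cases in b) and c) rely on $P$ being closed.
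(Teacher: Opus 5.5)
Your proposal is correct and is essentially the paper's own argument: each item goes by induction on the formula. The prime cases reduce to Lemmas~\ref{lem:equiv_rel}, \ref{lem:ex_falso_etc}, \ref{lem:properties_in} and Definition~\ref{def:x_inP_y}, the implication case of a) is handled via the symmetric equality $y =_P x$, and the quantifier cases of b) and c) rest on $P$ being a sentence (your explicit $\exists$ case in b) just spells out a step the paper leaves implicit).
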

Before we continue with the proof, let us emphasize that b) becomes false if extended to (unrestricted) \emph{formulas}~${P}$: Consider~${P :\equiv x = \emptyset}$ and~${\psi :\equiv \forall x \bot}$. Then,~${P \to \psi_P}$ abbreviates the non-derivable statement~${x = \emptyset \to \forall x\ x = \emptyset}$.
\begin{proof}[Proof of Corollary~\ref{cor:repl_ex_falso}]
	For a), we use Lemmas~\ref{lem:equiv_rel} and~\ref{lem:properties_in} to derive the claim for prime formulas. The result for arbitrary formulas follows easily by induction along their structure. In the case of implications, we apply the symmetric statement (assuming~${y =_P x}$ instead of~${x =_P y}$) to our premise. Notice that the cases involving quantifiers are unproblematic with respect to our assumption~${x =_P y}$ since neither~${x}$ nor~${y}$ is bound by~${\varphi(x)}$ and~${P}$ has no free variables.
	
	For b), we use Lemma~\ref{lem:ex_falso_etc}~a) to derive the claim for~${x = y}$. For prime formulas~${x \in y}$, it follows trivially from Definition~\ref{def:x_inP_y}. Finally, for~${\bot}$, it is a simple tautology. Similar to before, we apply induction along the structure to cover all other formulas. In the case of implications, we only need the derive the conclusion and then apply weakening. Again, we can introduce universal quantifiers since~${P}$ has no free variables.
	
	For c), we use Lemma~\ref{lem:ex_falso_etc}~b) and~c) as well as Definition~\ref{def:x_inP_y} and extensionality to derive the claim for prime formulas. In order to extend this to non-prime formulas, we argue as in a). In the case of implications, we apply the respective opposite directions to our premise.
\end{proof}
We continue with the last preparatory step for our main result: the admissibility of our translation.
\begin{theorem}\label{thm:translation}
	Let~${T}$ be either~${\czf{}}$ together with full separation or~${\izf{}}$. Let~${P}$ be a sentence and~${\varphi}$ a formula. If~${T}$ proves~${\varphi}$, then it also proves~${\varphi_P}$.
\end{theorem}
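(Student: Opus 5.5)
Since the translation $\varphi \mapsto \varphi_P$ commutes with all connectives and quantifiers and replaces $\bot$ by $P$, we argue by induction on derivations in intuitionistic first-order logic with equality. Logical rules are preserved verbatim, because a rule instance translates to a rule instance. Ex falso $\bot \to \psi$ becomes $P \to \psi_P$, which is Corollary~\ref{cor:repl_ex_falso}~b); this is exactly where we use that $P$ is a sentence. The equality axioms, namely reflexivity and the substitution scheme $x = y \to \varphi(x) \to \varphi(y)$, become reflexivity of $=_P$ (Lemma~\ref{lem:equiv_rel}) and Corollary~\ref{cor:repl_ex_falso}~a). It therefore remains to show that $T$ proves $\alpha_P$ for every nonlogical axiom $\alpha$ of $T$. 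Throughout we use that $\in_P$ is extensional in both arguments (Lemma~\ref{lem:properties_in}) and that $x\in y \to x \in_P y$. Bounded quantifiers $\forall z \in y$ thus translate to $\forall z\,(z\in_P y \to \dots)$. For a formula that respects $=_P$, which by Corollary~\ref{cor:repl_ex_falso}~a) is every translated formula, we have $\forall z\,(z \in_P y \to \chi_P(z)) \leftrightarrow P \lor\ldots$. More usefully, $\forall z \in_P y\ \chi_P(z)$ is equivalent to $\forall z \in y\ \chi_P(z)$, because $P$ implies $\chi_P$. Likewise $\exists z \in_P y\ \chi_P(z)$ is equivalent to $P \lor \exists z\in y\ \chi_P(z)$, again using $P \to \chi_P$.

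We then go through the axioms, using these reductions. Extensionality$_P$ is the right-to-left direction of Proposition~\ref{prop}. For pairing and union, the witness is the usual set. For union, suppose $z \in_P w$ and $w \in_P x$. Either $P$ holds, or we may replace $w$ by some $w'\in x$ with $w=_P w'$; then $z\in_P w'$ gives $P$ or some $z'\in w'$ with $z =_P z'$, and $z' \in \bigcup x$. Infinity: take $\omega$. For each $n\in\omega$, showing $n \in_P\omega$ is trivial. For the other direction, a set $x$ satisfying the translated defining clause either satisfies $P$, which suffices, or is $=_P$ to $\emptyset$ or to some $m\cup\{m\}$. Here we need that the $=_P$-class of a genuine successor is respected, which follows from Proposition~\ref{prop}. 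Set induction$_P$ follows from ordinary set induction applied to the formula $\varphi_P$, using the reduction $\forall y \in_P x\ \varphi_P(y) \leftrightarrow \forall y\in x\ \varphi_P(y)$.

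For full separation$_P$, given $x$ and $\varphi$, we separate $y := \{z \in x \mid \varphi_P(z)\}$; full separation is essential here, since $\varphi_P$ is unbounded even when $\varphi$ is bounded. Then $z\in_P y$ means $P$ or $z =_P z'$ for some $z'\in x$ with $\varphi_P(z')$. By extensionality of $\varphi_P$ this gives $z \in_P x\land \varphi_P(z)$, and the converse is similar. For strong collection$_P$, the hypothesis reduces to $\forall y\in x\ \exists z\ \varphi_P(y,z)$. Ordinary strong collection yields $w$, and the two conclusions hold with $\in_P$ via the reductions above. Subset collection$_P$ is handled in the same way with the ordinary $z$, since the inner hypothesis is again bounded over genuine members after reduction.

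The main obstacle I expect is powerset$_P$ in $\izf$. Given $x$, I would take $y := \mathcal P(x)$ and need, for any $z$ with $\forall u\,(u\in_P z \to u \in_P x)$, some $z' \in y$ with $z =_P z'$, or else $P$. The natural candidate is $z' := \{u\in x \mid u \in_P z\}$, which exists by full separation. We must check $z =_P z'$ via Proposition~\ref{prop} in its strengthened form, where the premises use $\in$. For $u\in z$ we get $u\in_P x$, hence $P$ or some $u'\in x$ with $u=_P u'$; then $u' \in_P z$, so $u'\in z'$, giving $u\in_P z'$. Conversely, for $u\in z'$ we have $u\in_P z$ directly. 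So $z\in_P y$, and this should go through. Subset collection for $\czf$ plus full separation follows from this argument as well, or by an analogous construction.
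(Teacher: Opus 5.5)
Your proposal follows the paper's proof axiom by axiom, with the same witnesses for full separation and powerset. However, the treatment of subset collection has a genuine gap, and this matters for $\czf$ with full separation.

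By your own reduction, the translated hypothesis $\forall v \in_P x\ \exists w \in_P y\ \varphi_P(v,w,u)$ is equivalent to $\forall v \in x\,(P \lor \exists w \in y\ \varphi_P(v,w,u))$. This is \emph{not} the hypothesis of ordinary subset collection for $\varphi_P$. Unlike in strong collection, the inner existential is bounded by $y$, and the $P$-disjunct produces no genuine element of $y$. If $y$ were inhabited, any element would absorb the $P$-case, since $P \to \varphi_P$. But $y$ may be empty, and you cannot decide which.

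The ordinary witness really does fail. Take $y := \emptyset$ and $x := \{\emptyset \mid P\}$.
\begin{itemize}
\item The translated hypothesis reduces to $\forall v \in x\ P$, which is provable.
\item $z_0 := \{\emptyset \mid \lnot P\}$ is a legitimate witness of ordinary subset collection for $x, y$ and any formula, because the ordinary hypothesis forces $x = \emptyset$, i.e.\ $\lnot P$.
\item The translated conclusion $\exists y' \in_P z_0\,(\dots)$ gives $P \lor \exists y'' \in z_0\,(\dots)$, hence $P \lor \lnot P$.
\end{itemize}

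The missing idea is the paper's dummy element. Apply subset collection to $x$, to $\bar y := \{\emptyset\} \cup \{\{a\} \mid a \in y\}$, and to the formula $\bar\varphi(v,\bar w,u) := (\bar w = \emptyset \land P) \lor \exists w'\,(\bar w = \{w'\} \land \varphi_P(v,w',u))$. Then the $P$-case is witnessed by $\emptyset \in \bar y$. Finally take $z := \{\bigcup \bar y' \mid \bar y' \in \bar z\}$ to strip the dummy off again.

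Your closing remark that subset collection ``follows from this argument as well'' does not rescue the $\czf$ case, since $\czf$ with full separation has no powerset axiom. For $\izf$ you are fine, because subset collection is derivable there from powerset and full separation, and your powerset argument is the paper's.

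A smaller point concerns Infinity. The left-to-right direction requires showing, for each $n \in \omega$, the translated defining clause for $n$, not $n \in_P \omega$. That is, $n$ is $P$-empty, or $n$ is $=_P$ to the successor of some $m \in_P \omega$. This is routine with your reductions, but as written that direction is not addressed.
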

\begin{proof}
	Inductively, we translate a proof of~${\varphi}$ to one of~${\varphi_P}$. All axioms and rules that do not explicitly involve certain predicates can be transferred directly. Importantly, notice that the lack of free variables in~${P}$ allows us to apply~${\forall}$-introductions and~${\exists}$-eliminations as in the original proof. Finally, we discuss all remaining axioms, i.e.\ those involving equality and falsum as well as all axioms of \czf{} (except for bounded separation) and \izf{}.
	
	\textbf{Equality}: In Lemma~\ref{lem:equiv_rel}, we have seen that~${=_P}$ is an equivalence relation. Moreover, it satisfies the equality axioms with respect to our predicate~${\in_P}$ by Lemma~\ref{lem:properties_in}.
	
	\textbf{Ex falso quodlibet}: This axiom is satisfied by Corollary~\ref{cor:repl_ex_falso}.
	
	\textbf{Extensionality}: This is covered by Proposition~\ref{prop}.
	
	\textbf{Pairing}: Given~${x}$ and~${y}$, let~${z}$ be a set satisfying both~${x \in z}$ and~${y \in z}$. By Lemma~\ref{lem:properties_in}, this set~${z}$ also satisfies both~${x \in_P z}$ and~${y \in_P z}$.
	
	\textbf{Union}: Given~${x}$, let~${y}$ be a witness of the union axiom, i.e.~satisfying~${\bigcup x \subseteq y}$. Let~${z}$ and~${w}$ be such that~${w \in_P x}$ and~${z \in_P w}$ hold. If~${w \in_P x}$ is satisfied because of~${P}$, then~${z \in_P y}$ follows trivially. Otherwise, we find~${w' \in x}$ with~${w =_P w'}$. From the latter and~${z \in_P w}$, we derive~${z \in_P w'}$ by Lemma~\ref{lem:properties_in}.
	 
	If this relation~${z \in_P w'}$ holds, just like before because of~${P}$, then~${z \in_P y}$, again, follows trivially. Otherwise, we find~${z' \in w'}$ with~${z =_P z'}$. From~${z' \in w'}$ and~${w' \in x}$, we conclude~${z' \in y}$. This leads to~${z' \in_P y}$ by Lemma~\ref{lem:properties_in}. A second application of this lemma together with~${z =_P z'}$ yields~${z \in_P y}$.
	
	\textbf{Infinity}: Let~${\omega}$ be a witness of the infinity axiom. For the direction from left to right, consider~${x}$ with~${x \in_P \omega}$. If this relation is satisfied because of~${P}$, our claim follows from Corollary~\ref{cor:repl_ex_falso}. Otherwise, there is~${x' \in \omega}$ with~${x =_P x'}$. By assumption on~${\omega}$, there are two cases: In the first case,~${x'}$ is empty. Assume~${y \in_P x}$. Then, also~${y \in_P x'}$ holds by Lemma~\ref{lem:properties_in}. Since~${x'}$ is empty, we conclude~${P}$. In the second case, there is~${y \in \omega}$ with~${x' = y \cup \{y\}}$. Assume~${z \in_P x}$. From Lemma~\ref{lem:properties_in}, we derive~${z \in_P x'}$. If this relation holds because of~${P}$, the right hand side~${z \in_P y \lor z =_P y}$ follows trivially. Otherwise, we find~${z' \in x'}$ with~${z =_P z'}$. The former entails~${z' \in y}$ or~${z' = y}$. Hence, by Lemma~\ref{lem:properties_in} and extensionality (applied to~${z' =_P z'}$), we arrive at~${z' \in_P y}$ or~${z' =_P y}$, respectively. Finally, Lemmas~\ref{lem:equiv_rel} and \ref{lem:properties_in} with~${z =_P z'}$ lead to our claim~${z \in_P y \lor z =_P y}$. For the converse direction, assume that this disjunction is satisfied. Beginning with~${z \in_P y}$, our claim~${z \in_P x}$ follows trivially  if~${P}$ holds. Otherwise, there exists~${z' \in y}$ with~${z =_P z'}$. By definition of~${x'}$, we have~${z' \in x'}$. Lemma~\ref{lem:properties_in} leads to~${z' \in_P x'}$ and Lemmas~\ref{lem:equiv_rel} and~\ref{lem:properties_in} together with~${x =_P x'}$ and~${z =_P z'}$ yield our claim~${z \in_P x}$. In the other case with~${z =_P y}$, we apply Lemma~\ref{lem:properties_in} to the fact~${y \in x'}$ and use a similar argument to derive~${z \in_P x}$.
	
	For the direction from right to left, let us first assume that~${x}$ is a set entailing~${P}$ for all~${y \in_P x}$. Clearly, this yields~${x =_P \emptyset}$ (for~${y \in \emptyset}$, we get~${y \in_P x}$ by ex falso quodlibet; for~${y \in_P x}$, we get~${P}$ and, therefore,~${y \in_P \emptyset}$ by Lemma~\ref{lem:ex_falso_etc}). Together with~${\emptyset \in \omega}$ as well as Lemma~\ref{lem:properties_in}, we arrive at~${x \in_P \omega}$.
	
	Finally, let us assume that~${x}$ is a set for which an element~${y \in_P \omega}$ exists that satisfies~${\forall z.\ z \in_P x \leftrightarrow z \in_P y \lor z =_P y}$. W.l.o.g., we can assume~${y \in \omega}$: If~${y \in_P \omega}$ holds because of~${P}$, we are done. Otherwise, if there is~${y' \in \omega}$ with~${y =_P y'}$, then~${y}$ can be replaced with~${y'}$ in our assumption using Corollary~\ref{cor:repl_ex_falso}. We aim to prove~${x =_P y \cup \{y\}}$. If~${z \in_P x}$ holds, we have~${z \in_P y}$ or~${z =_P y}$. If the former case holds because of~${P}$, we are done. If there is~${z' \in y}$ with~${z =_P z'}$, then also~${z' \in_P y \cup \{y\}}$ holds by Lemma~\ref{lem:properties_in}. We conclude~${z \in_P y \cup \{y\}}$ using the same result. In the latter case with~${z =_P y}$, we use~${y \in y \cup \{y\}}$ together with the same lemma. Now, assume that~${z \in y \cup \{y\}}$ holds. If~${z \in y}$, then~${z \in_P y}$ follows by Lemma~\ref{lem:properties_in} and we derive~${z \in_P x}$ by assumption. If~${z = y}$, then~${z =_P y}$ follows by extensionality and the reflexivity of~${=_P}$. Again, our assumption leads to~${z \in_P x}$.
	By definition of~${\omega}$ together with~${y \in \omega}$, we have~${y \cup \{y\} \in \omega}$. By Lemma~\ref{lem:properties_in} and~${x =_P y \cup \{y\}}$, we conclude~${x \in_P \omega}$.

	\textbf{Set induction}: Consider the instance of this scheme with respect to a formula~${\varphi(x)}$. In an effort to derive its translation, assume~${\forall y \in_P x\ \varphi_P(y) \to \varphi_P(x)}$ for all sets~${x}$. In order to derive~${\forall z \in x\ \varphi_P(z) \to \varphi_P(x)}$ for all~${x}$, assume that~${\varphi_P(z)}$ holds for all~${z \in x}$. Consider an arbitrary~${y \in_P x}$. In the first case, we have~${P}$, which immediately leads to~${\varphi_P(y)}$. In the second one, we find~${y' \in x}$ with~${y =_P y'}$. By assumption, we have~${\varphi_P(y')}$. Together with Lemma~\ref{lem:equiv_rel} and Corollary~\ref{cor:repl_ex_falso}, this entails~${\varphi_P(y)}$. In conclusion, we have~${\forall y \in_P x\ \varphi_P(y)}$. By our assumption, this yields~${\varphi_P(x)}$. Since~${x}$ was arbitrary, the premise of set induction with respect to the formula~${\varphi_P(x)}$ is satisfied. Finally, we conclude that~${\varphi_P(x)}$ holds for all sets~${x}$.
	
	\textbf{Full separation}: Consider the instance of this scheme with respect to a formula~${\varphi(z)}$. Let~${y}$ be the set that results from applying (full) separation to~${x}$ using~${\varphi(z)}$. We claim that~${y}$ also witnesses the translated instance. First, assume~${z \in_P y}$. Then, if~${P}$ holds, both~${z \in_P x}$ and~${\varphi_P(z)}$ follow immediately by Definition~\ref{def:x_inP_y} and Corollary~\ref{cor:repl_ex_falso}. Otherwise, we have~${z' \in y}$ with~${z =_P z'}$. By assumption on~${y}$, both~${z' \in x}$ and~${\varphi_P(z')}$ follow. Applying Lemma~\ref{lem:properties_in} to the former as well as Lemma~\ref{lem:equiv_rel} and Corollary~\ref{cor:repl_ex_falso} to the latter, we arrive at both~${z \in_P x}$ and~${\varphi_P(z)}$.
	
	For the other direction, assume~${z \in_P x}$ and~${\varphi_P(z)}$. If the former holds because of~${P}$, then~${z \in_P y}$ holds by Lemma~\ref{lem:ex_falso_etc}. Otherwise, we have~${z' \in x}$ with~${z =_P z'}$. Now, Corollary~\ref{cor:repl_ex_falso} entails~${\varphi_P(z')}$ and, by definition of~${y}$, we conclude~${z' \in y}$. Finally, by Lemma~\ref{lem:properties_in}, we derive~${z \in_P y}$.
	
	\textbf{Strong collection}: Consider the instance of this scheme with respect to a formula~${\varphi(y, z)}$. Assume that for all sets~${y}$ with~${y \in_P x}$, there is~${z}$ with~${\varphi_P(y, z)}$. By Lemma~\ref{lem:properties_in}, this entails that for any~${y \in x}$, there is~${z}$ with~${\varphi_P(y, z)}$. Now, let~${w}$ be the set that results from applying the principle of strong collection to~${\varphi_P(y, z)}$ under this assumption. In order to derive~${\forall y \in_P x\ \exists z \in_P w\ \varphi_P(y, z)}$, consider an arbitrary set~${y}$ with~${y \in_P x}$. If this relation is satisfied due to~${P}$, then~${\exists z \in_P w\ \varphi_P(y, z)}$ holds trivially. Otherwise, we find~${y' \in x}$ with~${y =_P y'}$. By assumption on~${w}$, there is~${z \in w}$ with~${\varphi_P(y', z)}$. By Lemma~\ref{lem:properties_in}, we have~${z \in_P w}$ and Corollary~\ref{cor:repl_ex_falso} allows us to derive~${\varphi_P(y, z)}$. The argument for~${\forall z \in_P w\ \exists y \in_P x\ \varphi_P(y, z)}$ is analogous.
	
	\textbf{Subset collection}: Consider the instance of this scheme with respect to a formula~${\varphi(v, w, u)}$. We claim that its translation is derivable. For this, let~${x}$ and~${y}$ be arbitrary sets. Now, consider an instance of subset collection with respect to the formula
	\begin{equation*}
		\bar{\varphi}(v, \bar{w}, u) := (\bar{w} = \emptyset \land P) \lor \exists w' (\bar{w} = \{w'\} \land \varphi_P(v, w', u))\period
	\end{equation*}
	Let~${\bar{z}}$ be the set that results from this principle using~${\bar{\varphi}(v, w, u)}$ together with~${x}$ and~${\bar{y} := \{\emptyset\} \cup \{\{a\} \mid a \in y\}}$. The latter set can easily be constructed from~${y}$ using pairing (to define~${\{\emptyset\}}$) as well as strong collection (to define~${\{\{a\} \mid a \in y\}}$) and union. We claim that~${z := \{\bigcup y' \mid y' \in \bar{z}\}}$ (defined using union and strong collection) witnesses the translated principle for our original formula.
	
	Let~${u}$ be arbitrary and assume~${\forall v \in_P x\ \exists w \in_P y\ \varphi_P(v, w, u)}$. Let~${v \in x}$ be arbitrary. By Lemma~\ref{lem:properties_in}, we have~${v \in_P x}$ and, therefore, find~${w}$ with~${w \in_P y}$ and~${\varphi_P(v, w, u)}$. If~${w \in_P y}$ holds via~${P}$, then there is~${\bar{w} := \emptyset \in \bar{y}}$ with~${\bar{\varphi}(v, \bar{w}, u)}$. Otherwise, there is~${w' \in y}$ with~${w =_P w'}$. From Corollary~\ref{cor:repl_ex_falso}, we derive~${\varphi_P(v, w', u)}$. Hence, there is~${\bar{w} := \{w'\} \in \bar{y}}$ with~${\bar{\varphi}(v, \bar{w}, u)}$. In conclusion, for any~${v \in x}$, there is~${\bar{w} \in \bar{y}}$ with~${\bar{\varphi}(v, \bar{w}, u)}$. Thus, there is~${\bar{y}' \in \bar{z}}$ satisfying~${\forall v \in x\ \exists \bar{w} \in \bar{y}'\ \bar{\varphi}(v, \bar{w}, u)}$ and~${\forall \bar{w} \in \bar{y}'\ \exists v \in x\ \bar{\varphi}(v, \bar{w}, u)}$. We set~${y' := \bigcup{\bar{y}'} \in z}$.
	
	Let~${v}$ be a set with~${v \in_P x}$. If this holds because of~${P}$, then~${\exists w \in_P y'\ \varphi_P(v, w, u)}$ holds by Corollary~\ref{cor:repl_ex_falso}. Otherwise, there is~${v' \in x}$ with~${v =_P v'}$. By assumption on~${\bar{y}'}$, we find~${\bar{w} \in \bar{y}'}$ with~${\bar{\varphi}(v', \bar{w}, u)}$. If~${\bar{w} = \emptyset}$ holds, then the latter entails~${P}$, which yields~${\exists w \in_P y'\ \varphi_P(v, w, u)}$ as before. Otherwise,~${\bar{w}}$ is a singleton. Hence, there is~${w \in y'}$ (which is the union of~${\bar{y}'}$) satisfying~${\bar{w} = \{w\}}$. By Lemma~\ref{lem:properties_in}, the former yields~${w \in_P y'}$. Moreover,~${\bar{\varphi}(v', \bar{w}, u)}$ entails~${\varphi_P(v', w, u)}$ and, using Corollary~\ref{cor:repl_ex_falso}, also~${\varphi_P(v, w, u)}$ from~${v =_P v'}$. For the second conjunct, consider a set~${w}$ with~${w \in_P y'}$. Similar to before, if this relation holds because of~${P}$, then~${\exists v \in_P x\ \varphi_P(v, w, u)}$ is immediate. Otherwise, we find~${w' \in y'}$ with~${w =_P w'}$. By definition of~${y'}$, we have~${\bar{w} := \{w'\} \in \bar{y}'}$. Hence, there is~${v \in x}$ satisfying~${\bar{\varphi}(v, \bar{w}, u)}$. By Lemma~\ref{lem:properties_in}, the former yields~${v \in_P x}$. By definition, the latter entails~${\varphi_P(v, w', u)}$. Using Corollary~\ref{cor:repl_ex_falso} and~${w =_P w'}$, we get~${\varphi_P(v, w, u)}$.
	
	\textbf{Powerset}: Let~${y}$ be the powerset of some set~${x}$. Given any~${z}$, assume~${z \subseteq_P x}$, i.e.~that~${w \in_P z}$ entails~${w \in_P x}$ for arbitrary~${w}$. We define~${z' := \{w \in x \mid w \in_P z\}}$ using full separation\footnote{We require a set~${z'}$ that validates the \emph{translation} of being the intersection of~${x}$ and~${z}$. In Section~\ref{sec:discuss}, we will discuss why a straightforward definition~${z' := x \cap z}$ does not work in general.}. By definition, it is immediately clear that~${z' \in y}$ and, using Lemma~\ref{lem:properties_in}, also~${z' \in_P y}$ holds. In order to arrive at our claim, we prove~${z =_P z'}$: Let~${w \in_P z}$. Then,~${w \in_P x}$ follows by assumption. If this relation holds because of~${P}$, then~${w \in_P z'}$ follows from Lemma~\ref{lem:ex_falso_etc}. Otherwise, we find~${w' \in x}$ with~${w =_P w'}$. From~${w \in_P z}$, we then derive~${w' \in_P z}$ by Lemmas~\ref{lem:equiv_rel} and~\ref{lem:properties_in}. Hence, we have~${w' \in z'}$. Thus, Lemma~\ref{lem:properties_in} leads us to~${w \in_P z'}$. The other inclusion follows by definition of~${z'}$. Finally,~${z =_P z'}$ follows from Proposition~\ref{prop}. Together with~${z' \in_P y}$, Lemma~\ref{lem:properties_in} leads this to our claim~${z \in_P y}$.
	
	For the other direction, let~${z}$ be a set with~${z \in_P y}$. If this relation is satisfied because of~${P}$, then~${z \subseteq_P x}$ is trivial. Otherwise, we find~${z' \in y}$ with~${z =_P z'}$. Then,~${z' \subseteq x}$ must hold. Let~${w}$ be arbitrary with~${w \in_P z}$. By Lemma~\ref{lem:properties_in}, we have~${w \in_P z'}$. If this holds because of~${P}$, then our claim~${w \in_P x}$ is trivial. Otherwise, we find~${w' \in z'}$ with~${w =_P w'}$. The former yields~${w' \in x}$. Finally, together with~${w =_P w'}$, Lemma~\ref{lem:properties_in} leads to our claim~${w \in_P x}$.
\end{proof}
Finally, everything is prepared for the derivation of our main result:
\begin{theorem}\label{thm:main}
	Both systems \czf{} with full separation and \izf{} admit the independence of premise rule in the sense of Definition~\ref{def:IP}.
\end{theorem}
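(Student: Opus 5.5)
Assume $T \vdash \lnot\psi \to \exists y\ \varphi(y)$, with $y$ not free in $\psi$. The plan follows the arithmetical argument from the introduction, with Theorem~\ref{thm:translation} in place of Friedman's lemma. The difficulty is that $\psi$ may have free variables other than $y$, while Theorem~\ref{thm:translation} and Corollary~\ref{cor:repl_ex_falso} apply only to sentences $P$.

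\textbf{Reduction to sentences.} Let $\vec{x}$ list the free variables of $\psi$ and $\varphi$ other than $y$. Derive
\[T \vdash \forall \vec{x}\,(\lnot\psi(\vec{x}) \to \exists y\ \varphi(\vec{x},y)).\]
Once the sentence case is settled, we recover $\exists y\,(\lnot\psi \to \varphi(y))$ for arbitrary $\vec{x}$. The plan is to code $\vec{x}$ as a single parameter and work under the sentence
\[P :\equiv \exists \vec{x}\,(\vec{x} = \vec{c} \land \ldots)\]
but the cleanest route is probably to expand the language by fresh constants $\vec{c}$. A proof in $T$ of a formula with constants is a proof with free variables, so Theorem~\ref{thm:translation}, read schematically, applies with $P := \lnot\lnot\psi(\vec{c})$. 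Indeed, the induction in the proof of Theorem~\ref{thm:translation} only needs that $P$ contains no variable that is quantified or eigen-variable in the derivation, and constants satisfy this. So: replace $\vec{x}$ by fresh constants, so that $\lnot\lnot\psi$ is a sentence. I expect justifying this step to be the main obstacle. The alternative is to re-check that Lemmas~\ref{lem:equiv_rel}--\ref{lem:properties_in}, Proposition~\ref{prop} and Corollary~\ref{cor:repl_ex_falso} hold for $P$ with parameters not bound in the formulas in question. This is plausible, since the counterexample after Corollary~\ref{cor:repl_ex_falso} involves capture of a free variable of $P$.

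\textbf{Main argument.} With $P := \lnot\lnot\psi$, Theorem~\ref{thm:translation} gives
\[T \vdash (\psi_P \to P) \to \exists y\ \varphi_P(y),\]
using that $(\lnot\psi)_P$ is $\psi_P \to P$. We show the premise $\psi_P \to \lnot\lnot\psi$ is provable. Assume $\psi_P$ and $\lnot\psi$. Then $\lnot P$ holds, since $\lnot\psi$ refutes $\lnot\lnot\psi$. Corollary~\ref{cor:repl_ex_falso}~c) turns $\psi_P$ into $\psi$, contradicting $\lnot\psi$. Hence $T \vdash \exists y\ \varphi_P(y)$.

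\textbf{Reintroducing the premise.} Take such a $y$ and assume $\lnot\psi$. Then $\lnot P$ holds, so Corollary~\ref{cor:repl_ex_falso}~c) converts $\varphi_P(y)$ into $\varphi(y)$. This gives $\exists y\,(\lnot\psi \to \varphi(y))$. Finally, replace the constants by variables again to obtain the statement for the original formulas.
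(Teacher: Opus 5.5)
Your proposal is correct and is essentially the paper's own proof: replace the free variables of $\psi$ by fresh constants so that $P :\equiv \lnot\lnot\psi$ is a sentence, apply Theorem~\ref{thm:translation}, discharge the premise $\psi_P \to P$ via Corollary~\ref{cor:repl_ex_falso}~c) under $\lnot\psi$, then reintroduce $\lnot\psi$ behind the existential quantifier and revert the constants. Your justification of the constant trick (the translation argument only needs $P$ to contain no variable that is bound or used as an eigenvariable in the derivation) is more explicit than the paper's ``w.l.o.g.''. Reverting the constants is sound because $y$ is not among the replaced variables, and this is exactly where the hypothesis that $y$ is not free in $\psi$ enters.
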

\begin{proof}
	Let~${T}$ be either \czf{} with full separation or \izf{}. Assume that~${T}$ proves the implication~${\varphi :\equiv \lnot \psi \to \exists x\ \chi(x)}$ where~${x}$ may not occur freely in~${\psi}$. W.l.o.g., we may assume that~${\psi}$ is a sentence by replacing each variable~${v}$ in~${\varphi}$ that occurs freely in~${\psi}$ by a fresh constant~${c_v}$, which we add to our language. Let~${P :\equiv \lnot\lnot\psi}$ and apply Theorem~\ref{thm:translation}, which yields a proof of
	\begin{equation*}
		\varphi_P \equiv (\psi_{\lnot\lnot\psi} \to \lnot\lnot\psi) \to \exists x\ \chi_{\lnot\lnot\psi}(x)
	\end{equation*}
	in~${T}$. Recalling the argument from the introduction regarding Heyting arithmetic, assume~${\lnot\psi}$. Applying Corollary~\ref{cor:repl_ex_falso} to~${\psi_{\lnot\lnot\psi}}$, we derive~${\psi}$. Together with~${\lnot\psi}$, this yields~${\bot}$. In conclusion, the implication~${\psi_{\lnot\lnot\psi} \to \lnot\lnot\psi}$ is provable and we arrive at a derivation of
	\begin{align*}
		T &\vdash \exists x\ \chi_{\lnot\lnot\psi}(x)\period
	\intertext{Now, we introduce the assumption~${\lnot\psi}$ after the existential quantifier. Using the same argument as before, in the presence of the conclusion~${\chi_{\lnot\lnot\psi}(x)}$, this assumption entails~${\chi(x)}$. If~${\psi}$ was a sentence from the beginning, we now have a derivation of}
		T &\vdash \exists x.\ \lnot\psi \to \chi(x)
	\end{align*}
	and have, hence, arrived at our claim. Otherwise, if~${\psi}$ contained free variables~${v}$, which we replaced with constants~${c_v}$, then this operation has to be reverted: In a final step, we replace each constant~${c_v}$ with its original variable~${v}$. For this operation to be sound, it is important that no replacing variable is captured by a quantifier. Precisely this property is ensured by our assumption that~${x}$ does not occur freely in~${\psi}$.
\end{proof}

\section{Issues regarding \czf{} with bounded separation}\label{sec:discuss}

In Theorem~\ref{thm:main}, we have seen that \czf{} with full separation admits the independence of premise rule from Definition~\ref{def:IP}. Moreover, in the introduction, we have claimed that our method does not seem to work for \czf{} itself. In the final paragraphs of this article, let us discuss where the issue lies. Since our proof of Theorem~\ref{thm:translation} reveals that the translations of all other axioms can be derived in \czf{}, we see that the problematic axiom must be the \emph{bounded separation} principle. In fact, as the fascinating result \cite[Corollary~9.5.7]{AR10} (see also \cite[Corollary~4.3]{Aczel13}) shows, the problem must already be visible when we try to build the intersection of two sets:\footnote{One has to be slightly careful here, since the cited results employ sets of axioms that are different to ours. This difference becomes visible in the absence of separation.}

Let~${x}$ and~${y}$ be arbitrary sets. We try to construct~${z}$ satisfying
\begin{equation*}
	w \in_P z\quad\longleftrightarrow\quad w \in_P x \land w \in_P y
\end{equation*}
for all~${w}$. A first attempt for a definition is given by intersection itself:~${z := x \cap y}$. However, if we try to prove the direction from right to left and have sets~${w}$,~${w_1}$, and~${w_2}$ (the latter two possibly distinct) with~${w =_P w_1}$ and~${w =_P w_2}$ as well as~${w_1 \in x}$ and~${w_2 \in y}$, how are we supposed to derive~${w \in_P z}$? To be more precise, let~${P}$ be a bounded sentence that is independent from \czf{} (such as its consistency). Define
\begin{equation*}
	x := \{\emptyset\} \quad \text{and} \quad y := \{\{\emptyset \mid P\}\}\period
\end{equation*}
Given~${z := x \cap y}$, the decidability of~${P}$ is equivalent to~${z}$ being inhabited with respect to~${\in_P}$: If~${P}$ holds, then~${w \in_P z}$ is satisfied for any~${w}$. If~${\lnot P}$ holds, then~${\emptyset}$ and~${\{\emptyset \mid P\}}$ are equal and, hence,~${\emptyset \in_P z}$ is satisfied. Conversely, if there is~${w \in_P z}$, then either~${P}$ follows or we find~${w' \in z}$ (with~${w =_P w'}$), which entails~${\emptyset = \{\emptyset \mid P\}}$ and, hence,~${\lnot P}$.

However, there is always a common set~${w}$ with both~${w \in_P x}$ and~${w \in_P y}$: We just choose~${w := \emptyset}$. Then,~${w \in_P x}$ is trivial and for~${w' := \{\emptyset \mid P\} \in y}$, we prove~${w =_P w'}$: If~${u \in w}$, then~${u \in_P w'}$ follows trivially since~${w}$ is empty. If~${u \in w'}$, then~${u = \emptyset}$ holds and~${P}$ is satisfied, which entails~${u \in_P w}$.

If~${z}$ was a witness for the translated intersection of~${x}$ and~${y}$, then both~${w \in_P z}$ and the conjunction of~${w \in_P x}$ and~${w \in_P y}$ must be equivalent. Since the latter is provable for~${w := \emptyset}$, the same would hold for the former and, by the disjunction property of \czf{}, either~${P}$ or~${\lnot P}$ would be provable.

We conclude that the translated intersection of~${x}$ and~${y}$ requires a different definition. A fruitful choice is given by
\begin{equation*}
	z := \{w \in x \mid w \in_P y\}\period
\end{equation*}
However, this definition requires full separation: Even if all quantifiers in~${P}$ are bounded, the statement~${w \in_P y}$ still contains an unbounded existential quantifier. This construction is, therefore, not available in \czf{}.

\subsection*{Acknowledgments}
I would like to thank Rosalie Iemhoff, Nicholas Pischke, Michael Rathjen, and Albert Visser for our correspondence as well as Takayuki Kihara and Lo\"{i}c Pujet for their interesting remarks after my talk at the Logic Colloquium 2026 in Swansea.

No parts of this article were written or worded by AI. Likewise, all definitions, ideas, and proofs were made without the use of AI (except for Footnote~\ref{ft:con_zfc}).

ChatGPT 5.6 Sol was employed for spell-checking as well as advanced literature research in Section~\ref{sec:intro}. (This way, we discovered that, in the setting of Heyting arithmetic, Albert Visser had already proved independence of premise using Friedman-Dragalin before us.) Finally, it provided an initial idea for Footnote~\ref{ft:con_zfc}.

\bibliographystyle{plain}
\bibliography{ip}
	
\end{document}